\newtheorem{theorem}{Theorem}
\numberwithin{theorem}{section}
\newtheorem{proposition}[theorem]{Proposition}
\newtheorem{corollary}[theorem]{Corollary}
\newtheorem{remark}[theorem]{Remark}
\newtheorem{example}[theorem]{Example}
\newcommand{\RR}{\mathbb{R}}
\newcommand{\QQ}{\mathbb{Q}}
\newcommand{\PP}{\mathbb{P}}
\newcommand{\CC}{\mathbb{C}}
 \date{}
\title{\textbf{The Hurwitz Form of a Projective Variety}}
\author{Bernd Sturmfels}
\begin{document}
\maketitle

\begin{abstract} \noindent
The Hurwitz form of a variety is the discriminant that
characterizes linear spaces of complementary dimension
which intersect the variety in fewer than degree many points.
We study computational aspects of the Hurwitz form,
relate this to the dual variety and Chow form,
and show why
reduced degenerations are special on the Hurwitz polytope.
\end{abstract}

\section{Introduction}

Many problems in applied algebraic geometry can be
expressed as follows. We are given a fixed irreducible variety
$X $ in complex projective space $\PP^n$ of
dimension $d \geq 1$ and degree~$p$.
Suppose $X$ is defined over the field $\QQ$ of rational numbers.
We consider various linear subspaces $L \subset \PP^n$
of complementary dimension  $n-d$, usually defined 
over the real numbers in floating point representation.
The goal is to compute the intersection $L \cap X$ as accurately as possible.
If the subspace $L$ is generic then  $L \cap X$ consists of
$p$ distinct points with complex coordinates in $\PP^n$. 
How many of the $p$ points are real depends on the specific choice of~$L$.

In this paper we study
the discriminant associated with this family of polynomial systems.
The precise definition is as follows.
Let ${\rm Gr}(d,\PP^n)$ denote the Grassmannian of
codimension $d$ subspaces in $\PP^n$, and let
$\mathcal{H}_X \subset {\rm Gr}(d,\PP^n)$ 
be the subvariety  consisting of all subspaces $L$
such that $L \cap X$ does not consist of $p$ reduced points. 
The {\em sectional genus} of $X$, denoted $g = g(X)$, is the
arithmetic genus of the curve $X \cap L'$ where $L' \subset \PP^n$ is a general subspace
of codimension $d-1$. If $X$ is regular in codimension $1$
then the curve $X \cap L'$ is smooth (by Bertini's Theorem) and $g$ is its geometric genus.
The following result describes our object.

\begin{theorem} \label{thm:basic}
Let $X$ be an irreducible subvariety of $\PP^n$ having degree $p \geq 2$
and sectional genus $g$. Then $\mathcal{H}_X$ is an
irreducible hypersurface in the Grassmannian ${\rm Gr}(d,\PP^n)$, 
defined by an irreducible element $\,{\rm Hu}_X$ 
in the coordinate ring of ${\rm Gr}(d,\PP^n)$.
If  the singular locus of $X$ has codimension at least $2$ then
the degree of $ {\rm Hu}_X$ in Pl\"ucker coordinates equals
$2p+2g-2$.
\end{theorem}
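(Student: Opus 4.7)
The plan is to realize $\mathcal{H}_X$ as the projection of an incidence correspondence on $X$ and then to reduce the degree count to a Riemann--Hurwitz calculation on the sectional curve.

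\textbf{Incidence and irreducibility.} I form the incidence variety
\[
 I \;:=\; \overline{\{(L,x) \in {\rm Gr}(d,\PP^n) \times X_{\rm sm} : x \in L,\ T_xX \cap T_xL \neq 0\}}.
\]
Projecting to $X$, the fiber over a smooth point $x$ consists of those codimension-$d$ subspaces through $x$ whose image in $\PP^n/x \cong \PP^{n-1}$ meets the fixed $(d-1)$-plane $T_xX/x$. This is the classical codimension-one Schubert variety in the sub-Grassmannian of codimension-$d$ subspaces through $x$, hence irreducible. So $I$ is irreducible of dimension $\dim {\rm Gr}(d,\PP^n)-1$. Under the hypothesis ${\rm codim}\,{\rm Sing}(X)\geq 2$, the loci of subspaces $L$ that meet ${\rm Sing}(X)$, or that contain a positive-dimensional component of $X \cap L$, both lie in codimension at least two in ${\rm Gr}(d,\PP^n)$, so the top-dimensional part of $\mathcal{H}_X$ coincides with $\overline{\pi_1(I)}$. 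Once I verify that $\pi_1$ is generically finite (this falls out of the pencil computation below), I conclude that $\mathcal{H}_X$ is an irreducible hypersurface cut out by an irreducible polynomial ${\rm Hu}_X$ in Pl\"ucker coordinates.

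\textbf{Degree via the sectional curve.} I compute $\deg {\rm Hu}_X$ by intersecting $\mathcal{H}_X$ with a general line in the Pl\"ucker embedding, taken of Schubert form: fix a general codimension-$(d-1)$ subspace $L' \subset \PP^n$ and a general codimension-$2$ subspace $A \subset L'$, and let $\{L_t\}_{t \in \PP^1}$ be the pencil of hyperplanes of $L'$ containing $A$. By Bertini and the singularity hypothesis, $C := L' \cap X$ is a smooth irreducible curve of degree $p$ and genus $g$, and a dimension count gives $A \cap C = \emptyset$. Since $L_t \subset L'$, the scheme-theoretic equality $L_t \cap X = L_t \cap C$ holds, so the pencil meets $\mathcal{H}_X$ precisely when $L_t$ is tangent to $C$ inside $L'$. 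The pencil defines a base-point-free morphism $\phi\colon C \to \PP^1$ of degree $p$, and Riemann--Hurwitz gives
\[
 \deg R_\phi \;=\; 2g-2 - p\cdot(-2) \;=\; 2p+2g-2.
\]
For a sufficiently general pencil all ramifications are simple and each contributes multiplicity one to the intersection with $\mathcal{H}_X$, yielding $\deg {\rm Hu}_X = 2p+2g-2$.

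\textbf{Main obstacle.} The delicate part is not the Riemann--Hurwitz count itself but the bookkeeping that identifies it with the Pl\"ucker intersection number. I must verify, for a sufficiently general pencil, that every tangent $L_t$ is tangent to $X$ at exactly one smooth point with multiplicity one, that $\pi_1\colon I \to \mathcal{H}_X$ is therefore birational (so no hidden multiplicity is absorbed into ${\rm Hu}_X$), and that the pencil crosses $\mathcal{H}_X$ transversally at each such $L_t$. These are Bertini-type statements for the correspondence $I$; together with the singularity hypothesis and the genericity of $L'$ and $A$, they allow the topological ramification count on $C$ to be read off as the scheme-theoretic intersection number on ${\rm Gr}(d,\PP^n)$.
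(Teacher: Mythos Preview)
Your degree computation is essentially the same as the paper's: both slice $X$ by a general $L'$ of codimension $d-1$ to obtain a smooth curve $C$ of degree $p$ and genus $g$, and then read off $2p+2g-2$ from the dual of $C$ (the paper cites the classical degree formula for $(C)^*$, you derive it via Riemann--Hurwitz on a generic pencil; these are the same fact). Your use of a Schubert pencil $A\subset L_t\subset L'$ as a line on ${\rm Gr}(d,\PP^n)$ to compute the Pl\"ucker degree is correct and matches the paper's computation.

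Where you diverge is in the proof that $\mathcal{H}_X$ is an irreducible hypersurface. The paper does not build an incidence correspondence; instead it invokes the Cayley trick (Proposition~\ref{prop:CayleyTrick}): setting $Y=X\times\PP^{d-1}$, the Hurwitz locus is identified with the dual variety $Y^*$, and then irreducibility is automatic from general duality theory, while the hypersurface condition is read off from \cite[Cor.~1.5.9]{GKZ} as equivalent to $p\geq 2$. Your incidence-variety argument is more self-contained, but note that you explicitly invoke ${\rm codim}\,{\rm Sing}(X)\geq 2$ to rule out extra components of $\mathcal{H}_X$ coming from $L$'s that meet ${\rm Sing}(X)$. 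The theorem, however, asserts irreducibility of $\mathcal{H}_X$ for \emph{every} irreducible $X$ of degree $\geq 2$, with the singularity hypothesis imposed only for the degree formula. So as written your argument proves a slightly weaker statement than the one claimed; the Cayley-trick route buys you irreducibility in full generality without having to analyze what happens over ${\rm Sing}(X)$.
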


The polynomial ${\rm Hu}_X$ defined here is the 
{\em Hurwitz form} of $X$. The name was chosen as a reference
to the Riemann-Hurwitz formula, which says that
a curve of degree $p$ and genus $g$ has $2p+2g-2$
ramification points when mapped onto $\PP^1$.
We say that ${\rm Hdeg}(X) := {\rm deg}({\rm Hu}_X) = 
2p+2g-2$ is the {\em Hurwitz degree} of $X$.
When $X$ is defined over $\mathbb{Q}$ then so is
$\mathcal{H}_X$.  Since $X$ is irreducible, its Hurwitz form ${\rm Hu}_X$ is irreducible.
With the geometric definition we have given, ${\rm Hu}_X$ is unique up to sign,
when written in Stiefel coordinates on ${\rm Gr}(d,\PP^n)$,
if we require it to have relatively prime integer coefficients.
When written in Pl\"ucker coordinates,  $\pm {\rm Hu}_X$ is unique 
only modulo the ideal of  quadratic Pl\"ucker relations.

The Hurwitz form ${\rm Hu}_X$ belongs to the family of
{\em higher associated hypersurfaces} described by
Gel'fand, Kapranov and Zelevinsky in \cite[Section 3.2.E]{GKZ}.
These hypersurfaces interpolate between the Chow form ${\rm Ch}_X$ 
and the $X$-discriminant. The latter is the equation of the dual variety $X^*$.
In that setting, the Hurwitz form ${\rm Hu}_X$ is only one step away
from the Chow form ${\rm Ch}_X$. An important result in \cite[Section 4.3.B]{GKZ} states
that these higher associated hypersurfaces are precisely the
coisotropic hypersurfaces in ${\rm Gr}(d,\PP^n)$, so their defining polynomials are
governed by the Cayley-Green-Morrison constraints for integrable distributions.

This article is organized as follows. In Section 2 we 
discuss examples, basic facts, 
and we derive Theorem~\ref{thm:basic}.
Section 3 concerns the Hurwitz polytope whose
vertices correspond to the initial Pl\"ucker monomials of 
the Hurwitz form. We compare this to the Chow polytope of \cite{KSZ}.
In Section 4 we define the Hurwitz form of a reduced cycle, and we show that this
 is compatible with flat families. As an application we resolve
 problems (4) and (5) in \cite[\S 7]{SSV}.

\section{Basics}

We begin with 
 examples that illustrate 
 Hurwitz forms in
computational algebraic geometry.

\begin{example}[Curves] \label{ex:curves}
\rm
If $X$ is a curve in $\PP^n$, so $d=1$, then
$\mathcal{H}_X = X^*$ is the hypersurface dual to $X$,
and ${\rm Hu}_X$ is the $X$-discriminant.
For instance, if $X$ is the rational normal curve in $\PP^n$ then
${\rm Hu}_X$ is the discriminant of a polynomial of degree $n$ in one variable.
For a curve $X$ in the plane $(n=2)$, the Hurwitz form is the polynomial 
that defines  the dual curve $X^*$, so the Hurwitz degree ${\rm Hdeg}(X)$ is the
degree  of $X^*$, which is $p(p-1)$ if $X$ is nonsingular. 
\hfill $\Diamond$
\end{example}

\begin{example}[Hypersurfaces]
\label{ex:hypersurfaces} \rm
Suppose that $X$ is a hypersurface in $\PP^n$, so $d=n-1$,
with defining polynomial $f(x_0,x_1,\ldots,x_n)$.
The Grassmannian ${\rm Gr}(n-1,\PP^n)$ is a manifold
of dimension $2n-2$ in the projective space $\PP^{\binom{n+1}{2}-1}$
with  dual Pl\"ucker coordinates $q_{01}, q_{02}, \ldots,q_{n-1,n}$.
We can compute
${\rm Hu}_X$ by first computing the discriminant of the univariate
polynomial function  $t \mapsto f( u_0 + t v_0, u_1 + t v_1 ,\ldots, u_n + t v_n)$, then
removing extraneous factors, and finally expressing the result in terms of 
$2 \times 2$-minors via $\,q_{ij} = u_i v_j - u_j v_i$.

We can make this explicit when $p=2$.
Let $M$ be a symmetric $(n+1) \times (n+1)$-matrix of rank $\geq 2$
and $X$ the corresponding quadric hypersurface in $\PP^n$.
We write $\wedge_2 M$ for the second exterior power
of $M$, and  $Q  = (q_{01}, q_{02}, \ldots,q_{n-1,n})$
for the row vector of dual Pl\"ucker coordinates.
With this notation, the Hurwitz form is the following quadratic form in the $q_{ij}$:
\begin{equation}
\label{eq:wedge2}  {\rm Hu}_X \,\, = \,\, Q \cdot (\wedge_2 M) \cdot Q^t.
\end{equation}

For a concrete example let $n = 3$ and consider the quadric surface
$X  = V(x_0x_3 - x_1 x_2)$.
Then ${\rm Hdeg}(X) = 2$ and the Hurwitz form equals
${\rm Hu}_X = q_{03}^2 + q_{12}^2 + 2 q_{03} q_{12} - 4 q_{02} q_{13}$.
When expressed in terms of Stiefel coordinates $u_i, v_j$, this is precisely
the {\em hyperdeterminant} of format $2 \times 2 \times 2$.
This is explained by Proposition \ref{prop:CayleyTrick}, with
$X = \PP^1 \times \PP^1$ and $Y = X \times \PP^1 \subset \PP^7$.
\hfill $\Diamond$
\end{example}

\begin{example}[Toric Varieties] 
\label{ex:toric}
\rm
Consider a toric variety $X_A \subset \PP^n$, defined by
a rank $d$ matrix $A \in \mathbb{Z}^{d \times (n+1)}$ with
$(1,1,\ldots,1)$ in its row space. Then
${\rm Hu}_{X_A}$ is the {\em mixed discriminant}
\cite{CCDRS} of $d$ Laurent polynomials in $d$ variables
with the same support $A$.
If $A$ is a unit square then
${\rm Hu}_{X_A}$ is the hyperdeterminant
seen above and in \cite[Example 2.3]{CCDRS}.
If $X_A$ is the $k$th Veronese embedding of $\PP^2$,
a surface of degree $p=k^2$ in $\PP^{\binom{k+2}{2}-1}$, 
then ${\rm Hu}_{X_A}$ is the classical
{\em tact invariant} which vanishes whenever two plane
curves of degree $k$ are tangent. Its degree is ${\rm Hdeg}(X_A) = 3 k^2 - 3k$.
An explicit formula for $k=2$ will be displayed in Example \ref{ex:quadvero}.
\hfill $\Diamond$
\end{example}

Our goal is to develop tools for writing
the Hurwitz form ${\rm Hu}_X$ explicitly as a polynomial.
There are four different coordinate systems for doing so,
depending on how the  subspace $L \in {\rm Gr}(d,\PP^n)$ is expressed.
If $L$ is the kernel of a $d \times (n {+}1)$-matrix then
the entries of that matrix are the {\em primal Stiefel coordinates}
and its maximal minors are the
{\em primal Pl\"ucker coordinates},  denoted $p_{i_1 i_2 \cdots i_d}$.
If $L$ is the row space of an $(n{+}1{-}d) \times (n{+}1)$-matrix then
the entries of that matrix are the {\em dual Stiefel coordinates}
and its maximal minors are the
{\em dual Pl\"ucker coordinates},  denoted $q_{j_0 j_1 \cdots j_{n-d}}$,
as in Example \ref{ex:hypersurfaces}.
In practice, one uses primal coordinates when
$d = {\rm dim}(X)$ is small, and one uses dual coordinates
when $n-d = {\rm codim}(X)$ is small.
The same conventions are customary for writing 
{\em Chow forms} \cite{DS, KSZ, Cortona}.

The hypersurface defined by the Chow form of $X$ is called the
{\em associated variety} in~\cite{GKZ, WZ}. In these references,
the Chow form is constructed as the
dual of a  Segre product. This is the
 {\em Cayley trick} of elimination theory.
 We now do the same for the Hurwitz form of~$X$.

\begin{proposition} \label{prop:CayleyTrick}
Let $X$ be an irrreducible variety of dimension $d$ and degree $p \geq 2$ in $\PP^n$,
and consider $Y = X \times \PP^{d-1}$ in its Segre embedding in
$\PP^{d(n+1)-1}$. The dual variety $Y^*$ is a hypersurface
in the dual $\,\PP^{d(n+1)-1}$. 
The Hurwitz form  $\,{\rm Hu}_X$  of
the given variety $X$, when written
in  the $d(n+1)$ primal Stiefel coordinates,
is equal to the defining polynomial of $Y^*$.
\end{proposition}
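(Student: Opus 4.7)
The plan is to apply the Cayley trick directly. Identify $\PP^{d(n+1)-1}$ with the projectivization of the space of $d \times (n+1)$ complex matrices, so that the Segre embedding of $Y = X \times \PP^{d-1}$ has affine cone
\[
\hat Y \;=\; \bigl\{\, v \otimes x \,:\, x \in \hat X,\; v \in \CC^d \,\bigr\},
\]
the rank-one matrices whose $x$-factor lies in the affine cone over $X$. A hyperplane in the dual projective space is then specified by a $d\times(n+1)$ matrix $A = (a_{ij})$, and for generic $A$ (i.e.\ of maximal rank $d$) I identify it with the primal Stiefel coordinates of the subspace $L := \ker(A) \in \mathrm{Gr}(d,\PP^n)$. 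Under this identification, the proposition reduces to showing that $A \in Y^*$ if and only if $L \in \mathcal{H}_X$.

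Next I compute the tangency condition. At a smooth point $v \otimes x$ of $\hat Y$ (with $x \in X_{\mathrm{sm}}$ and $v \neq 0$) the tangent space is
\[
T_{v\otimes x}\hat Y \;=\; \hat T_x X \otimes v \;+\; x \otimes \CC^{d}.
\]
The hyperplane $H_A$ contains $T_{v\otimes x}\hat Y$ precisely when (i) $v^{T} A y = 0$ for every $y \in \hat T_x X$, and (ii) $A x = 0$; condition (ii) automatically yields the incidence $v\otimes x \in H_A$. Hence $A \in Y^*$ iff there exist $x \in X_{\mathrm{sm}}$ and $0 \neq v \in \CC^d$ satisfying (i) and (ii).

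I would then eliminate $v$. Condition (ii) says $x \in L$. Since $x$ itself lies in $\hat T_x X$, the linear map $A\colon \hat T_x X \to \CC^d$ descends to $\bar A\colon T_x X \to \CC^d$, a map between two $d$-dimensional vector spaces. Condition (i) asks for a nonzero $v$ in the left kernel of $\bar A$, which exists iff $\bar A$ is not surjective, iff (by rank--nullity) $\bar A$ has nonzero right kernel. The right kernel corresponds exactly to the points of $L \cap T_x X$ other than $x$ itself, so this elimination condition reads: $L$ meets the embedded projective tangent space $T_x X \subset \PP^n$ in more than the single point $x$. This is precisely the condition that $L \cap X$ fails to be transverse at $x$, which by definition of $\mathcal{H}_X$ in Theorem \ref{thm:basic} means $L \in \mathcal{H}_X$.

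To conclude, we have a set-theoretic equality $Y^* = \pi^{-1}(\mathcal{H}_X)$ on the open locus where $\pi\colon A \mapsto \ker(A)$ is defined. By Theorem \ref{thm:basic}, $\mathcal{H}_X$ is an irreducible hypersurface defined by ${\rm Hu}_X$, so $\pi^{-1}(\mathcal{H}_X)$ is the irreducible hypersurface cut out by ${\rm Hu}_X$ written in primal Stiefel coordinates. Since $Y$ is irreducible, so is $Y^*$; the two irreducible varieties agree on a dense open set, hence coincide, and their defining polynomials match up to a nonzero scalar. The main technical hurdle is the tangent-space computation for the Segre $Y$ and the clean descent from $\hat T_x X$ to $T_x X$ in step three; one must verify that the surviving condition really picks out the tangent-intersection failure defining $\mathcal{H}_X$, rather than one of the neighboring associated hypersurfaces in the GKZ hierarchy (such as the Chow form ${\rm Ch}_X$ or the dual variety $X^*$). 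Once that identification is in place, irreducibility and degree matching are formal.
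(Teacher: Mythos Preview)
Your argument is a direct, explicit unfolding of the Cayley trick, whereas the paper simply cites \cite[Section 3.2.D--E]{GKZ}. The tangent-space computation and the identification of the resulting condition with non-transversality of $L$ at a smooth point of $X$ are correct (modulo a harmless swap of tensor factors in your formula for $T_{v\otimes x}\hat Y$), and this is indeed how one would prove the corresponding statement in \cite{GKZ} from scratch. So the geometric core is sound and more informative than the paper's proof.

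There is, however, a logical circularity. In the last paragraph you invoke Theorem~\ref{thm:basic} to conclude that $\mathcal H_X$ is an irreducible hypersurface defined by ${\rm Hu}_X$. But in this paper, the proof of Theorem~\ref{thm:basic} \emph{uses} Proposition~\ref{prop:CayleyTrick}: it applies \cite[Cor.~1.5.9]{GKZ} to the representation $Y = X\times\PP^{d-1}$ in order to establish that $\mathcal H_X$ (equivalently $Y^*$) is a hypersurface in the first place. So you cannot appeal to Theorem~\ref{thm:basic} here. Relatedly, the statement you are proving asserts that $Y^*$ is a hypersurface, and your argument does not establish this independently; you deduce it from the hypersurface claim in Theorem~\ref{thm:basic}.

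The fix is to reverse the direction of that inference. Irreducibility of $Y^*$ is automatic since $Y$ is irreducible. For the codimension, argue directly (or cite \cite[Cor.~1.5.9]{GKZ}) that $Y = X\times\PP^{d-1}$ has no dual defect precisely because $p\geq 2$ forces $X$ not to be linear, hence ${\rm codim}(X^*)\leq d$. Once $Y^*$ is known to be an irreducible hypersurface, your set-theoretic identification $Y^*\cap\{\text{rank }d\} = \pi^{-1}(\mathcal H_X)$ shows that $\mathcal H_X$ is an irreducible hypersurface and that its defining equation, pulled back through $\pi$, is the equation of $Y^*$. That is the content of the proposition, and Theorem~\ref{thm:basic} then follows from it rather than the other way around.
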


\begin{proof}
This is analogous to Theorem 2.7 in Section 3.2.D of \cite{GKZ}.
While that particular statement concerns  the case of the Chow form,
the discussion in \cite[Section 3.2.E]{GKZ} ensures that 
the result is the same for higher associated hypersurfaces,
such as the Hurwitz form.
\end{proof}

We next prove the statements about dimension and degree of
$\mathcal{H}_X$
 given in the introduction.
The idea is to reduce to the special case of curves, as discussed
in Example \ref{ex:curves}.

\begin{proof}[Proof of Theorem~\ref{thm:basic}]
Applying  Corollary 5.9 in \cite[Section 1.5.D]{GKZ}
to the representation in Proposition \ref{prop:CayleyTrick},
we find that $\mathcal{H}_X$ is a hypersurface
if and only if ${\rm codim}(X^*) \leq d$, or 
${\rm dim}(X^*) \geq n-d$. In light of
Corollary 1.2 in \cite[Section 1.1.A]{GKZ}, this happens
if and only if $X$ is not a linear space. So, 
since we assumed $p \geq 2$, this means that ${\mathcal H}_X$ is
a hypersurface.

Write $L = L' \cap H$ where $H$ is a varying hyperplane
and $L'$ is a fixed generic linear subspace of codimension $d-1$ in $\PP^{n}$.
The codimension $d$ subspace $L$ is a point in $\mathcal{H}_X$ if and only if
the zero-dimensional scheme
$X \cap L = X \cap (L' \cap H) = (X \cap L') \cap H$ is not reduced.
This happens if and only if $H$ is tangent to the curve $X \cap L'$
if and only if $H$ is a point in the projective variety dual to $X \cap L'$.
This curve is smooth and irreducible, by Bertini's Theorem, and it
has degree $p$ and genus $g$.
A classical result states
this dual to $X \cap L'$ is a  hypersurface of degree $2p+2g-2$;
see, for instance, the paragraph after Theorem 2.14 in \cite[Section 2.2.B]{GKZ}.
Hence
$\mathcal{H}_X$ is a hypersurface of that same degree in ${\rm Gr}(d,\PP^n)$.
\end{proof}

One motivation for studying the Hurwitz form ${\rm Hu}_X$ 
 comes from the analysis
of numerical algorithms for computing $L \cap X$. An appropriate tubular neighborhood
around $\mathcal{H}_X$ in ${\rm Gr}(d,\PP^n)$ is the locus
where the homotopy methods of numerical algebraic geometry
run into trouble. This is quantified by
the {\em condition number} of the algebraic function
$L \mapsto L \cap X$.  The quantity ${\rm Hdeg}(X) = 2p+2g-2$ is
crucial for bounding that condition number (cf.~\cite{BC}).

\begin{example} \label{ex:essential} \rm
This article was inspired by a specific application
to multiview geometry in computer vision, studied
in \cite[\S 3]{ALST}. The
 {\em variety of essential matrices} is a subvariety $X$ 
   in  the projective space $\PP^8$
of $3 \times 3$-matrices. Its real points are the rank $2$ matrices
whose two nonzero singular values coincide.
We have $d = 5$, $p = 10$, $g = 6$. This implies that
${\rm Hdeg}(X) = 30$, by Theorem  \ref{thm:basic}, so
${\mathcal H}_X$ is a hypersurface in the  Grassmannian ${\rm Gr}(5,\PP^8)$
whose defining polynomial ${\rm Hu}_X$ has degree $30$ in the
$\binom{9}{3} = 84$ Pl\"ucker coordinates.  \hfill $\Diamond$
\end{example}

\begin{remark} \rm
Let $h(m)$ be the Hilbert polynomial of $X \subset \PP^n$.
If $X$ is Cohen-Macaulay then we can read off
the Hurwitz degree from the 
$(d-1)$st derivative of the Hilbert polynomial:
$$ {\rm Hdeg}(X) \,\, = \,\, 2p+2g-2 \,\, = \,\, 
2 \cdot |h^{(d-1)}(-1)|. $$
Indeed, if $d=1$ then this comes from the
familiar  formula $\,h(m) = p m + (1-g) \,$
for the Hilbert polynomial of a projective curve.
For $d \geq 2$, the Cohen-Macaulay hypothesis
ensures that the numerator of the Hilbert series
remains the same under generic hyperplane sections,
and the Hilbert polynomial is transformed under such sections by
taking the derivative.

In {\tt Macaulay2} \cite{M2}, we can compute the integer
$\, |h^{(d-1)}(-1)| = {\rm Hdeg}(X)/2$ from the ideal {\tt I} of $X$
in two possible ways: take the
coefficient of ${\tt P}_{d-1}$ in
{\tt hilbertPolynomial(I)}, or add the last two entries in
{\tt genera(I)}. For instance, for the variety in Example \ref{ex:essential},
the former command gives
${\tt 6*P}_3 - {\tt 15*P}_4 + {\tt 10*P}_5$ and
the latter command gives
{\tt \{0,0,0,0,6,9\}}.
\end{remark}

If the parameters $d,n,p,g$ of the given variety $X$ are small enough, then
we can use computer algebra to determine the Hurwitz form ${\rm Hu}_X$,
and to write it as an explicit polynomial in the Pl\"ucker coordinates
on ${\rm Gr}(d,\PP^n)$. The following example will serve as an illustration:

\begin{example} \label{ex:quadvero} \rm
Let $X$ be  the Veronese surface in $\PP^5$, defined by the parametrization
$(x:y:z) \mapsto (x^2: xy:xz: y^2:yz:z^2)$. Here $n=5$, $d = 2$, $p = 4$, $g = 0$.
Following Example \ref{ex:toric}, the Hurwitz form
of $X$ is the classical tact invariant for $k=2$. We have the explicit formula
\begin{small}
$$ \begin{matrix} {\rm Hu}_X  \,= \,\,
 4 p_{01} p_{14}^2 p_{15}^3
- 4 p_{13} p_{12}^2 p_{15}^3
- 4 p_{34} p_{24}^2 p_{04}^3 
+ 4 p_{45} p_{14}^2 p_{04}^3
- 4 p_{25} p_{12}^2 p_{23}^3 
+ 4 p_{02} p_{24}^2 p_{23}^3  
 +  p_{12}^2 p_{14}^2  p_{24}^2 \\
\,+  \,p_{04}^2 p_{14}^2 p_{24}^2
+  p_{12}^2 p_{14}^2 p_{15}^2 
+  p_{12}^2 p_{23}^2 p_{24}^2
-16 p_{01} p_{13} p_{15}^4 
-16 p_{02} p_{25} p_{23}^4
-16 p_{34} p_{45} p_{04}^4
+ 256 p_{03}^2 p_{05}^2 p_{35}^2
\end{matrix}
$$
\end{small}
\vspace{-0.2cm}
\begin{tiny}
$$
\begin{matrix}
-12p_{04}^2p_{24}^3p_{13}-8p_{23}p_{04}^2p_{14}p_{24}^2+18p_{04}p_{24}^3p_{14}p_{03}
-8p_{23}^2p_{04}^2p_{24}^2+36p_{24}^3p_{03}p_{23}p_{04}-27p_{24}^4p_{03}^2 
-4p_{02}p_{14}^3p_{24}^2
-10p_{23}p_{14}p_{12}p_{24}^2p_{04} \\
+2p_{14}^2p_{12}p_{24}^2p_{04}  
  +12p_{02}p_{23}p_{14}^2p_{24}^2
+18p_{24}^3p_{14}p_{03}p_{12}-12p_{04} 
+p_{24}^3p_{12}p_{13} -18p_{23}p_{24}^3p_{03}p_{12}+20p_{23}^2p_{12}p_{24}^2p_{04}
-12p_{23}^2p_{02}p_{24}^2p_{14} \\ +2p_{23}p_{12}^2p_{24}^2p_{14}-4p_{12}^2p_{24}^3p_{13} 
-8p_{04}^3p_{14}p_{24}p_{35}+48p_{04}^3p_{24}p_{34}p_{15}+48p_{04}^3p_{14}p_{34}p_{25}
-80p_{04}^2p_{14}p_{24}p_{34}p_{05} +8p_{04}^3p_{24}p_{23}p_{35}  \\
+48p_{04}^3p_{23}p_{34}p_{25}  
{-} 120p_{04}^2p_{24}p_{03}p_{34}p_{25}+144p_{05}p_{24}^2p_{03}p_{04}p_{34}
-40p_{04}^2p_{24}p_{23}p_{05}p_{34}-12p_{14}^3p_{04}^2p_{25}+18p_{14}^3p_{04}p_{24}p_{05}  \\
-8p_{04}^2p_{14}^2p_{24}p_{15}   +28p_{04}^2p_{24}p_{14}p_{23}p_{15}+4p_{04}^2p_{24}p_{14}p_{25}p_{13} 
-52p_{04}p_{14}^2p_{24}p_{23}p_{05}-52p_{04}p_{14}p_{03}p_{24}^2p_{15}  
{+}120p_{05}p_{23}p_{14}p_{24}^2p_{03} \\
 +36p_{04}^2p_{24}^2p_{15}p_{13}+36p_{14}^2p_{23}p_{04}^2p_{25}
+72p_{24}^3p_{15}p_{03}^2+216p_{24}p_{15}p_{23}^2p_{04}^2-144p_{24}^3p_{13}p_{05}p_{03} 
{+}368p_{24}^2p_{13}p_{05}p_{23}p_{04}
-6p_{05}p_{14}^2p_{24}^2p_{03} \\ 
-264p_{04}^2p_{24}p_{25}p_{13}p_{23}+120p_{04}p_{24}^2p_{25}p_{13}p_{03} 
-264p_{04}p_{23}^2p_{14}p_{24}p_{05}-304p_{24}^2p_{15}p_{03}p_{23}p_{04}
+96p_{14}p_{23}^2p_{04}^2p_{25} +144p_{03}^2p_{23}p_{24}^2p_{25} \\ -16p_{23}^3p_{04}p_{24}p_{05} 
-24p_{05}p_{23}^2p_{24}^2p_{03}-160p_{03}p_{23}^2p_{24}p_{25}p_{04}
+48p_{23}^3p_{04}^2p_{25}-2p_{04}p_{14}^2p_{12}p_{24}p_{15} +12p_{15}p_{02}p_{14}^3p_{24}
+12p_{14}^3p_{12}p_{04}p_{25} \\
- 18p_{14}^3p_{12}p_{24}p_{05}+50p_{15}p_{14}p_{24}p_{12}p_{23}p_{04}
-46p_{14}p_{24}p_{12}p_{04}p_{25}p_{13} -46p_{14}^2p_{23}p_{12}p_{05}p_{24}
+8p_{14}^2p_{23}p_{12}p_{04}p_{25} -8p_{23}p_{15}p_{02}p_{14}^2p_{24} \\ -8p_{04}p_{12}p_{24}^2p_{15}p_{13}
-46p_{15}p_{14}p_{24}^2p_{03}p_{12}+92p_{14}p_{24}^2p_{12}p_{05}p_{13} 
+52p_{15}p_{24}p_{23}^2p_{12}p_{04} -58p_{15}p_{24}^2p_{23}p_{03}p_{12}
+6p_{03}p_{24}^2p_{12}p_{25}p_{13} \\ -36p_{23}^2p_{12}p_{14}p_{04}p_{25} 
-4p_{23}^2p_{12}p_{14}p_{05}p_{24}-36p_{23}^2p_{02}p_{15}p_{14}p_{24}
+52p_{13}p_{23}p_{24}p_{12}p_{25}p_{04}-40p_{23}^3p_{12}p_{05}p_{24} 
+48p_{23}^3p_{02}p_{15}p_{24}\\ -48p_{23}^3p_{12}p_{04}p_{25}+80p_{23}^2p_{24}p_{12}p_{25}p_{03}
-4p_{14}^3p_{12}^2p_{25}+2p_{14}^2p_{12}^2p_{15}p_{24} +18p_{12}^2p_{14}p_{24}p_{25}p_{13}
{-}12p_{12}^2p_{15}p_{24}^2p_{13} {-}12p_{23}p_{12}^2p_{14}^2p_{25} \\ +4p_{23}p_{12}^2p_{14}p_{15}p_{24} 
+18p_{23}p_{12}^2p_{24}p_{25}p_{13}-12p_{23}^2p_{12}^2p_{14}p_{25}
+2p_{23}^2p_{12}^2p_{15}p_{24}+128p_{04}^3p_{34}p_{05}p_{35} 
-128p_{04}^2p_{34}^2p_{05}^2+16p_{04}^4p_{35}^2 \\
+48p_{04}^3p_{15}^2p_{34}  -16p_{04}^3p_{14}p_{15}p_{35}+144p_{14}^2p_{04}p_{05}^2p_{34}
-24p_{14}^2p_{04}^2p_{05}p_{35}-160p_{04}^2p_{14}p_{15}p_{34}p_{05} 
-32p_{04}^3p_{25}p_{13}p_{35}-32p_{04}^3p_{15}p_{23}p_{35} \\
\end{matrix} $$ $$ \begin{matrix}
+416p_{04}p_{24}p_{15}p_{03}p_{34}p_{05} {+}128p_{04}^2p_{05}p_{14}p_{23}p_{35} 
{-}352p_{04}^2p_{25}p_{03}p_{34}p_{15}{+}160p_{04}^2p_{25}p_{13}p_{34}p_{05}
{-}384p_{04}^2p_{15}p_{23}p_{34}p_{05}{-}96p_{05}^2p_{13}p_{24}p_{04}p_{34} \\
+320p_{23}p_{14}p_{05}^2p_{04}p_{34}-192p_{05}^2p_{03}p_{24}p_{14}p_{34}
+384p_{04}p_{03}^2p_{25}^2p_{34}-288p_{03}^2p_{25}p_{24}p_{34}p_{05} 
+64p_{04}^2p_{25}p_{03}p_{23}p_{35}  -96p_{05}^2p_{03}p_{23}p_{24}p_{34} \\
-128p_{23}^2p_{04}^2p_{05}p_{35}-224p_{04}p_{25}p_{03}p_{23}p_{34}p_{05} 
+160p_{05}^2p_{23}^2p_{04}p_{34}
-27p_{14}^4p_{05}^2 
+36p_{14}^3p_{04}p_{05}p_{15}-8p_{04}^2p_{14}^2p_{15}^2
-72p_{14}^3p_{05}^2p_{23} \\ - 24p_{14}^2p_{13}p_{04}p_{05}p_{25} 
+96p_{04}^2p_{15}^2p_{24}p_{13}+144p_{05}^2p_{14}^2p_{24}p_{13}
-152p_{15}^2p_{04}^2p_{14}p_{23}+208p_{04}p_{14}^2p_{15}p_{23}p_{05} 
+104p_{15}^2p_{04}p_{14}p_{24}p_{03} \\ -24p_{14}^2p_{15}p_{05}p_{24}p_{03}
+104p_{04}^2p_{14}p_{15}p_{25}p_{13}-368p_{04}p_{14}p_{15}p_{05}p_{24}p_{13} 
+336p_{15}^2p_{03}p_{24}p_{23}p_{04}+336p_{04}p_{15}p_{23}^2p_{14}p_{05}  \\
+336p_{13}p_{05}p_{15}p_{24}^2p_{03} +336p_{04}p_{25}p_{13}^2p_{05}p_{24} 
-224p_{15}p_{04}p_{25}p_{13}p_{24}p_{03}-336p_{13}p_{05}p_{15}p_{24}p_{23}p_{04}
-112p_{23}p_{05}p_{15}p_{14}p_{24}p_{03} \\ -224p_{04}p_{25}p_{13}p_{05}p_{23}p_{14} 
+336p_{05}^2p_{24}p_{13}p_{23}p_{14}-56p_{04}^2p_{25}^2p_{13}^2
-336p_{05}^2p_{24}^2p_{13}^2-56p_{05}^2p_{14}^2p_{23}^2-56p_{15}^2p_{03}^2p_{24}^2 \\
-280p_{23}^2p_{15}^2p_{04}^232p_{14}p_{05}^2p_{23}^3
+320p_{23}p_{25}^2p_{03}p_{04}p_{13}+256p_{23}^2p_{15}p_{03}p_{25}p_{04} 
-128p_{23}p_{15}p_{03}^2p_{25}p_{24}  -288p_{23}^2p_{13}p_{04}p_{05}p_{25} \\
-64p_{23}^2p_{03}p_{05}p_{24}p_{15}+96p_{23}p_{25}p_{03}p_{05}p_{24}p_{13} 
-64p_{23}^2p_{05}^2p_{24}p_{13}  -192p_{03}^2p_{24}p_{25}^2p_{13}
+128p_{23}^3p_{03}p_{05}p_{25}+16p_{23}^4p_{05}^2-128p_{03}^2p_{23}^2p_{25}^2 \\
-12p_{14}^3p_{15}^2p_{02} {+} 18p_{14}^3p_{12}p_{05}p_{15} {-} 8p_{14}^2p_{15}^2p_{12}p_{04}
{-} 52p_{14}p_{15}p_{13}p_{12}p_{25}p_{04} {-} 36p_{14}^2p_{15}^2p_{02}p_{23} 
{+} 36p_{15}^2p_{04}p_{12}p_{24}p_{13} {-} 16p_{15}^2p_{14}p_{12}p_{23}p_{04} \\
+4p_{15}^2p_{14}p_{24}p_{03}p_{12}{-}6p_{14}^2p_{13}p_{12}p_{05}p_{25} 
{+}58p_{05}p_{12}p_{15}p_{14}^2p_{23}{+}80p_{12}p_{23}^2p_{05}p_{15}p_{14}
{+}40p_{23}p_{13}p_{15}p_{12}p_{25}p_{04}{+}120p_{24}p_{15}p_{03}p_{12}p_{25}p_{13}\\
+96p_{15}^2p_{23}^2p_{14}p_{02}+144p_{15}^2p_{23}p_{24}p_{03}p_{12}
-144p_{12}p_{13}^2p_{25}p_{05}p_{24}+24p_{23}p_{12}p_{13}p_{14}p_{05}p_{25} 
-160p_{15}^2p_{23}^2p_{12}p_{04}+72p_{12}p_{13}^2p_{25}^2p_{04} \\
-224p_{23}p_{12}p_{13}p_{15}p_{05}p_{24}
+56p_{05}p_{15}p_{23}^3p_{12} {+}120p_{12}p_{23}^2p_{13}p_{05}p_{25} 
-16p_{23}^2p_{15}p_{12}p_{25}p_{03}-144p_{23}p_{12}p_{25}^2p_{03}p_{13}-27p_{12}^2p_{13}^2p_{25}^2 \\
-48p_{02}p_{23}^3p_{15}^2  +  2 p_{15}^2p_{12}^2p_{14}p_{23} -12p_{12}^2p_{15}^2p_{24}p_{13} 
+18p_{12}^2p_{13}p_{14}p_{15}p_{25} +p_{15}^2p_{12}^2p_{23}^2+18p_{23}p_{12}^2p_{13}p_{15}p_{25}
-256p_{05}^2p_{04}p_{34}p_{03}p_{35} \\
+256p_{34}^2p_{03}p_{05}^3-128p_{05}p_{04}^2p_{03}p_{35}^2
-96p_{14}p_{13}p_{04}p_{05}^2p_{35}-32p_{15}^2p_{04}^2p_{03}p_{35} 
-288p_{05}^3p_{14}p_{13}p_{34}-96p_{05}^2p_{15}p_{03}p_{14}p_{34} \\
+64p_{05}^2p_{15}p_{04}p_{13}p_{34}+256p_{05}p_{15}p_{04}^2p_{13}p_{35} 
+256p_{15}^2p_{04}p_{03}p_{34}p_{05}-128p_{05}^3p_{23}p_{13}p_{34}
-128p_{05}p_{15}p_{03}p_{23}p_{35}p_{04}\\ -128p_{05}p_{25}p_{03}p_{13}p_{35}p_{04} 
+256p_{05}^2p_{25}p_{03}p_{13}p_{34}+256p_{05}^2p_{15}p_{03}p_{23}p_{34}
-128p_{03}^2p_{25}p_{15}p_{34}p_{05}+256p_{03}^2p_{25}p_{15}p_{35}p_{04} \\
-128p_{13}p_{23}p_{04}p_{05}^2p_{35} -256p_{03}^3p_{25}^2p_{35} 
+128p_{05}^2p_{03}p_{23}^2p_{35}+256p_{03}^2p_{23}p_{25}p_{05}p_{35} 
+48p_{15}^3p_{04}^2p_{13}-160p_{14}p_{15}^2p_{04}p_{13}p_{05} \\
-24p_{14}^2p_{15}^2p_{05}p_{03}-16p_{14}p_{15}^3p_{04}p_{03} +144p_{14}^2p_{13}p_{05}^2p_{15}  
-96p_{05}^2p_{13}^2p_{15}p_{24}
-192p_{13}^2p_{14}p_{05}^2p_{25}+64p_{15}^3p_{03}p_{23}p_{04} \\
+160p_{13}p_{15}^2p_{04}p_{23}p_{05}-352p_{13}p_{15}^2p_{04}p_{25}p_{03} 
-32p_{14}p_{13}p_{15}p_{05}^2p_{23}+160p_{15}^2p_{13}p_{05}p_{24}p_{03}
-128p_{15}^2p_{03}p_{05}p_{23}p_{14}\\
+416p_{05}p_{13}^2p_{15}p_{04}p_{25} 
{-}32p_{15}^3p_{03}^2p_{24}{+}384p_{03}^2p_{15}p_{25}^2p_{13}
{-}288p_{13}^2p_{05}p_{03}p_{25}^2 {-} 64p_{03}p_{15}^2p_{23}^2p_{05} 
{-} 64p_{03}^2p_{15}^2p_{23}p_{25} {-} 64p_{05}^2p_{13}p_{15}p_{23}^2 \\
{+} 384p_{13}^2p_{23}p_{05}^2p_{25}{-}256p_{05}p_{13}p_{15}p_{23}p_{25}p_{03} 
-80p_{14}p_{13}p_{12}p_{15}^2p_{05}+48p_{13}p_{14}p_{15}^3p_{02}
+48p_{15}^3p_{13}p_{12}p_{04}-8p_{14}p_{15}^3p_{12}p_{03} \\
-48p_{13}p_{15}^3p_{02}p_{23}-8p_{15}^3p_{03}p_{12}p_{23} 
-120p_{15}^2p_{13}p_{12}p_{25}p_{03}+16p_{12}p_{13}p_{15}^2p_{05}p_{23} 
+144p_{13}^2p_{12}p_{15}p_{05}p_{25} \\
+256p_{05}^3p_{13}^2p_{35}-256p_{13}p_{03}p_{05}^2p_{15}p_{35} 
-128p_{03}^2p_{15}^2p_{05}p_{35}
+128p_{15}^3p_{03}p_{13}p_{05}+16p_{15}^4p_{03}^2-128p_{05}^2p_{13}^2p_{15}^2
\end{matrix}
$$
\end{tiny}

The first $14$ monomials are the initial terms
with respect to the torus action on ${\rm Gr}(2,\PP^5)$.
In addition to these $14$ weight components, there are
$45$ other weight components from the non-initial terms.
Substituting $p_{ij} = a_i b_j - a_j b_i $
in ${\rm Hu}_X$ gives a  sum of
$3210$ monomials of bidegree $(6,6)$. This is
 the tact invariant (cf.~Example \ref{ex:toric})
of two ternary quadrics.
 \hfill $\Diamond$
\end{example}

Clearly, as the parameters $d,p,n,g$ increase, it soon becomes
prohibitive to compute such an explicit expansion for ${\rm Hu}_X$.
We can still hope to compute the initial terms from various
initial monomial ideals of $X$. This will be studied in the next section.
Another possibility is to use {\em numerical algebraic geometry}
to compute and represent the Hurwitz form ${\rm Hu}_X$.
We illustrate  the capabilities of {\tt Bertini} \cite{BHSW} with
a non-trivial example from computer vision.

\begin{example} \rm
The {\em calibrated trifocal variety}
$X$ has dimension $d = 11$ and it  lives in $\PP^{26}$. 
 This extends the variety of essential
 matrices in Example \ref{ex:essential}
 from two cameras to three cameras.
 It is obtained from the parametrization of the trifocal variety in
 \cite{AO} by specializing each of the three
 camera matrices to have a
  rotation matrix in its left $3 \times 3$-block.
 Computations in {\tt Bertini} \cite{BHSW} carried out by
 Joe Kileel and Jon Hauenstein revealed
that $p = 4912$ and $g = 13569$ respectively.
The details of this computation, and the equations  known
to vanish on $X$, will be described elsewhere.
 Hence for this variety, we have   ${\rm Hdeg}(X) = 36960$.  \hfill $\Diamond$
\end{example}

\section{The Hurwitz polytope}

An important combinatorial invariant of hypersurface in
a projective space $\PP^n$ is its Newton polytope. 
This generalizes to hypersurfaces in a  Grassmannian ${\rm Gr}(d,\PP^n)$
if we take the weight polytope with respect to the action of the
$(n+1)$-dimensional torus on ${\rm Gr}(d,\PP^n)$.
This torus action corresponds to
the $\mathbb{Z}^{n+1}$-grading on
the primal Pl\"ucker coordinates by
$$ {\rm deg}(p_{i_1 i_2 \cdots i_d}) \,\, = \,\, {\bf e}_{i_1} + 
{\bf e}_{i_2} + \cdots + {\bf e}_{i_d}. $$
The weight polytope of the Chow form of a variety
$X \subset \PP^n$ was studied in \cite{KSZ}.
It is known as the {\em Chow polytope}.
In this section we study the analogous polytope for the Hurwitz form.

We define the {\em Hurwitz polytope}, denoted ${\rm HuPo}_X$, of a projective variety 
$X \subset \PP^n$ to be the weight polytope of the Hurwitz form 
${\rm Hu}_X$.  This definition is to be understood as follows.
The coordinate ring of ${\rm Gr}(d,\PP^n)$
is $\mathbb{Z}^{n+1}$-graded. The Hurwitz form ${\rm Hu}_X$ is an element
of that ring, so it decomposes uniquely into 
$\mathbb{Z}^{n+1}$-graded components.
The Hurwitz polytope
${\rm HuPo}_X$ is the convex hull in $\RR^{n+1}$ of all
degrees that appear in this decomposition.

\begin{example} \rm
For the Segre variety $X = \PP^1 \times \PP^1$ in $\PP^3$, 
the Hurwitz form
is invariant under passing from dual Pl\"ucker coordinates
(as in Example \ref{ex:hypersurfaces})
to primal Pl\"ucker coordinates. The weights occurring in
${\rm Hu}_X = p_{03}^2 + p_{12}^2 + (2 p_{03} p_{12} - 4 p_{02} p_{13})$
are the following three points:
 $$ \hbox{
 ${\rm degree}(p_{03}^2) = (2,0,0,2)$,
 ${\rm degree}(p_{12}^2) =  (0,2,2,0)\,\,$ 
 and $\,\,{\rm degree}(p_{02}p_{13}) = (1,1,1,1)$.}
 $$
Their convex hull in $\RR^4$ is a line segment, and this is
the Hurwitz polytope ${\rm HuPo}_X$.
We note that the Chow form of our surface equals ${\rm Ch}_X = p_0 p_3 - p_1 p_2 $,
in dual Pl\"ucker coordinates. The line segment
${\rm HuPo}_X$ is twice the line segment
${\rm ChPo}_X = {\rm conv} \{ (1,0,0,1), (0,1,1,0) \}$.
The latter is the Chow polytope of $X$.
  \hfill $\Diamond$
\end{example}

Integer vectors ${\bf w} \in \mathbb{Z}^{n+1}$ represent one-parameter subgroups
of the torus action on $\PP^n$. Their action on subvarieties $X$ is compatible with the construction of
Hurwitz forms:
\begin{equation} \label{eq:epsilonHu}
{\rm Hu}_{ \epsilon^{\bf w} X}(p) \,\,\,  = \,\,\, {\rm Hu}_X(\epsilon^{-{\bf w}} p) .
\end{equation}
Here $\epsilon$ is a parameter and $\epsilon^{\bf w} = 
(\epsilon^{w_0},\epsilon^{w_1},\ldots,\epsilon^{w_n})$, and the action is
the same as that on Chow forms seen in
\cite[\S 2.3]{DS}. For $\epsilon \rightarrow 0$, the polynomial
in (\ref{eq:epsilonHu}) is the {\em initial form} ${\rm in}_{\bf w}({\rm Hu}_X)$.
For generic ${\bf w}$, this initial form is fixed under
the action of the torus $(\CC^*)^{n+1}$ on ${\rm Gr}(d,\PP^n)$, and 
it can hence be expressed as a monomial in the Pl\"ucker coordinates.
The following example illustrates these
{\em initial monomials} and how they determine
the Hurwitz polytope.

\begin{example} 
\label{ex:VeroHurPo}
\rm
Let $X$ be the Veronese surface in $\PP^5$. Its Hurwitz form ${\rm Hu}_X$
was displayed explicitly in Example \ref{ex:quadvero}.
  The Hurwitz polytope 
${\rm HuPo}_X$ is $3$-dimensional and it has $14$ vertices,
namely the weights of the first $14$  Pl\"ucker monomials, which were shown in
larger font. The weights of these monomials, in the given order, are the
  following $14$ points in $\RR^6$:
$$ \begin{matrix} 
(1 6 0 0 2 3),  (0 6 2 1 0 3),(3 0 2 1 6 0), (3 2 0 0 6 1),  (0 2 6 3 0 1), (1 0 6 3 2 0),  (0 4 4 0 4 0),  \\
   (2 2 2 0 6 0) ,  (0 6 2 0 2 2),    (0 2 6 2 2 0), 
 (1 6 0 1 0 4) , (1 0 6 4 0 1),   (4 0 0 1 6 1), (4 0 0 4 0 4).
 \end{matrix}
$$
The Chow form and Chow polytope of the Veronese surface $X$ 
 were displayed explicitly in  \cite[Section 2, page 270]{Cortona}. This
 should be compared to the Hurwitz form and Hurwitz polytope.
The initial monomials of  ${\rm Ch}_X$
correspond to the $14$ maximal toric degenerations of $X$, 
one for each of the $14$ triangulations of the
triangle $2 \Delta =  {\rm conv} \{(200),(020),(002)\}$.
 The Chow polytope ${\rm ChPo}_X$
is {\em normally equivalent} to the Hurwitz polytope ${\rm HuPo}_X$.
This means that both polytopes share the same normal fan in $\RR^6$.
In particular, they have the same combinatorial type.
That type is the {\em $3$-dimensional  associahedron}.
The correspondence between the initial monomials of ${\rm Ch}_X$
and those of ${\rm Hu}_X$ is  as follows, up to symmetry:
$$
\begin{matrix}
{\rm Ch}_X & &
p_{012} p_{124} p_{134} p_{345} &\,
p_{014} p_{024} p_{134} p_{245}  &\,
p_{015}^2 p_{134} p_{145} &\,
p_{015}^2 p_{135}^2 & \,
p_{035}^4  \\
{\rm  Hu}_X & &
    p_{12}^2 p_{14}^2  p_{24}^2 &
  p_{04}^2 p_{14}^2 p_{24}^2 &
    p_{01} p_{14}^2 p_{15}^3 &
 p_{01} p_{13} p_{15}^4  &\,
  p_{03}^2 p_{05}^2 p_{35}^2
\end{matrix}
$$
The first two  degenerations are reduced unions
of four coordinate planes in $\PP^5$, corresponding to the
unimodular triangulations of $2\Delta$.
The last degeneration is a plane of multiplicity four, corresponding to the trivial triangulation
of $2\Delta$ into a single large triangle $ \{0,3,5\}$.
 \hfill $\Diamond$
\end{example}

Example \ref{ex:VeroHurPo}
 raises the question whether
the vertices of the Hurwitz polytope and 
the Chow polytope are always in bijection. The answer  is ``no''
as the following example shows.

\begin{example}[Plane conics] \rm
Let $X$ be a plane conic in $\PP^2$ defined by the quadratic form
\begin{equation}
\label{eq:primalquadric}
 \begin{pmatrix} x_0 &  x_1 & x_2 \end{pmatrix} 
\begin{pmatrix}
m_{00} & m_{01} & m_{02} \\
m_{01} & m_{11} & m_{12} \\
m_{02} & m_{12} & m_{22} 
\end{pmatrix}
\begin{pmatrix} x_0 \\ x_1 \\ x_2 \end{pmatrix},
\end{equation}
where the $m_{ij}$ are scalars in $\mathbb{Q}$.
This equation agrees with the Chow form of $X$, so generally
the Chow polytope is the triangle
$ {\rm Ch}_X \,= \,2 \Delta$.
According to Example \ref{ex:curves}, the hypersurface $\mathcal{H}_X$ is the
dual conic $X^*$.
Hence the Hurwitz form is the quadratic form of the adjoint
\begin{equation}
{\rm Hu}_X \,=\, 
\label{eq:dualquadric}
 \begin{pmatrix} p_0 \! & \!  p_1 \! &\! p_2 \end{pmatrix} \!\!
\begin{pmatrix}
m_{11} m_{22} - m_{12}^2 &
 m_{12} m_{02} {-}m_{01} m_{22} &
m_{01} m_{12} {-} m_{11} m_{02} \\
 m_{12} m_{02} {-} m_{01} m_{22} &
m_{00} m_{22} - m_{02}^2 &
  m_{01} m_{02} {-} m_{00} m_{12} \\
m_{01} m_{12} {-} m_{11}  m_{02} &
 m_{01} m_{02} {-} m_{00} m_{12}  &
m_{00}  m_{11} - m_{01}^2
\end{pmatrix}\!\!
\begin{pmatrix} p_0 \\ p_1 \\ p_2 \end{pmatrix}\!.
\end{equation}
This agrees with  formula (\ref{eq:wedge2}) if we set
  $p_0 = q_{12}, p_1 = -q_{02} $ and $p_2 = q_{01}$.
  The Hurwitz polytope $ {\rm HuPo}_X $ is
  a subpolytope of the triangle $2 \Delta= {\rm conv} \{(200),(020),(002)\}$.
There are several combinatorial possibilities, depending on which
matrix entries in (\ref{eq:dualquadric}) are zero. For instance,
if $m_{11} m_{22} = m_{12}^2$ but the $m_{ij}$ are otherwise generic
then ${\rm HuPo}_X$ is a quadrilateral.
Similarly if $m_{11} = 0$ then ${\rm ChPo}_X $ is quadrilateral
and ${\rm HuPo}_X$ is a triangle. Hence, there is generally no map
from the set of vertices of one polytope to the  vertices of the other.
  \hfill $\Diamond$
\end{example}

A geometric explanation for  this example
is given by Katz' analysis in \cite{Kat} of
the duality of plane curve under degenerations.
Suppose that $X_\epsilon$ is a plane curve defined by
a ternary form
$\, f^q + \epsilon g \,$
where $f$ and $g$ are general ternary forms of degree $r$
and $p = qr$ respectively.
Here $\epsilon $ is a parameter.
For $\epsilon \not= 0$, the curve
$X_\epsilon$ is smooth, and the Hurwitz form
${\rm Hu}_{X_\epsilon}$ defines
the dual curve, of degree $p(p-1) = q^2r^2 - q r$.
Now consider the limit $\epsilon \rightarrow 0$.
The limit curve $X_0$ is $V(f)$ with multiplicity $q$,
with no trace of $V(g)$,
but the limit of the Hurwitz form remembers the
points in the intersection $V(f,g)$.

\begin{proposition} \label{prop:katz} {\rm \cite[Proposition 1.2]{Kat}} \ \ 
The constant term of ${\rm Hu}_\epsilon$ with respect to $\epsilon$ equals
\begin{equation}
\label{eq:katzlimit}
 {\rm Hu}_{X_\epsilon}|_{\epsilon = 0}  \quad = \quad
({\rm Hu}_{V(f)})^q \cdot \prod_{u \in V(f,g)} ({\rm Ch}_u)^{q-1} .
\end{equation}
\end{proposition}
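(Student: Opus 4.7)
For plane curves, Example~\ref{ex:curves} identifies ${\rm Hu}_{X_\epsilon}$ with the defining polynomial of the dual curve $X_\epsilon^*$, so the proposition becomes a claim about the scheme-theoretic limit of $X_\epsilon^*$ in $(\PP^2)^*$ as $\epsilon \to 0$. The plan is to represent ${\rm Hu}_{X_\epsilon}$ as an explicit discriminant and to extract its leading $\epsilon$-term. As a sanity check, the degrees match: by Theorem~\ref{thm:basic}, $\deg {\rm Hu}_{X_\epsilon} = p(p-1) = q^2r^2 - qr$, while the right-hand side of \eqref{eq:katzlimit} has degree $q \cdot r(r-1) + (q-1) \cdot |V(f,g)| = qr(r-1) + (q-1)qr^2 = q^2 r^2 - qr$, using $|V(f,g)| = pr = qr^2$ from B\'ezout. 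This rules out any unaccounted global factor in the final answer.

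Parametrize $L = \{u+tv\}$ and set $f_L(t) := f(u+tv)$, $g_L(t) := g(u+tv)$; by Example~\ref{ex:hypersurfaces}, ${\rm Hu}_{X_\epsilon}(L)$ agrees, up to $\epsilon$-independent extraneous factors, with ${\rm disc}_t(F_L)$ where $F_L(t) = f_L(t)^q + \epsilon g_L(t)$. For generic $L$, write $f_L(t) = \prod_{i=1}^r(t-a_i)$ with distinct roots. Near each $a_i$ we have $F_L(t) \approx f_L'(a_i)^q(t-a_i)^q + \epsilon g_L(a_i)$, so the $q$ roots of $F_L$ near $a_i$ form a cluster of radius $O(\epsilon^{1/q})$. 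A cluster/Puiseux expansion using ${\rm disc}(t^q - c) = \pm q^q c^{q-1}$ for the within-cluster contribution, the leading cross-cluster contribution $\prod_{i<j}(a_i - a_j)^{2q^2}$, and the identity $\prod_i f_L'(a_i) = \pm {\rm disc}(f_L)$ yields
\[
{\rm disc}_t(F_L) \;=\; C \cdot \epsilon^{r(q-1)} \prod_{i=1}^r g_L(a_i)^{q-1} \cdot {\rm disc}(f_L)^q \;+\; O\bigl(\epsilon^{r(q-1)+1/q}\bigr)
\]
for a nonzero constant $C = C(q,r)$. The factor $\epsilon^{r(q-1)}$ is exactly the extraneous $\epsilon$-dependent vanishing that must be divided out to extend ${\rm Hu}_{X_\epsilon}$ to a nowhere-vanishing section over $\epsilon = 0$, so the constant term in $\epsilon$ equals, up to a global scalar, $\prod_i g_L(a_i)^{q-1} \cdot {\rm disc}(f_L)^q$.

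It remains to recognize this expression. By Example~\ref{ex:curves}, ${\rm disc}(f_L) = {\rm Hu}_{V(f)}(L)$ up to a scalar, so the second factor contributes $({\rm Hu}_{V(f)})^q$. Since $f_L$ is monic, $\prod_i g_L(a_i) = {\rm Res}_t(f_L, g_L)$, a polynomial on the Grassmannian of lines in $\PP^2$ that vanishes exactly when $L$ passes through some $u \in V(f,g)$. For generic $f, g$, the scheme $V(f,g)$ consists of $qr^2$ reduced points and each pencil $\{L : L \ni u\}$ is cut out by the linear Pl\"ucker form ${\rm Ch}_u$; since both sides have degree $qr^2$, this forces ${\rm Res}_t(f_L, g_L) = c \cdot \prod_{u \in V(f,g)} {\rm Ch}_u(L)$ for a scalar $c$, and raising to the $(q-1)$st power completes the identification.

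The main obstacle is the cluster expansion itself: while the single-cluster computation is classical, assembling the $r$ clusters with the correct signs, constants, and cross-cluster interactions, and verifying that the subleading corrections are genuinely $O(\epsilon^{r(q-1)+1/q})$ uniformly on a Zariski-open set of lines, requires careful bookkeeping. Once this is in hand, the identification of the leading coefficient with $({\rm Hu}_{V(f)})^q \cdot \prod_u ({\rm Ch}_u)^{q-1}$ is essentially forced by support considerations, irreducibility of each linear form ${\rm Ch}_u$, and the degree check carried out in the first paragraph.
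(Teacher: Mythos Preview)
The paper gives no proof of this proposition; it is simply quoted from Katz~\cite[Proposition~1.2]{Kat}, where the argument proceeds via the geometry of the dual curve and its limit. Your approach is a genuinely different, more computational route: you realize ${\rm Hu}_{X_\epsilon}$ as a univariate discriminant and extract the leading $\epsilon$-term by a Puiseux/cluster analysis of the $qr$ roots of $F_L(t)=f_L(t)^q+\epsilon g_L(t)$. The bookkeeping you outline---within-cluster contribution $\pm q^q c^{q-1}$, cross-cluster contribution $\prod_{i<j}(a_i-a_j)^{2q^2}=\mathrm{disc}(f_L)^{q^2}$, and the cancellation against $\prod_i f_L'(a_i)^{q(q-1)}=\pm\mathrm{disc}(f_L)^{q(q-1)}$---correctly produces $\mathrm{disc}(f_L)^{q}\cdot\prod_i g_L(a_i)^{q-1}$. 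The identification of $\prod_i g_L(a_i)$ with $\mathrm{Res}_t(f_L,g_L)=c\prod_{u\in V(f,g)}{\rm Ch}_u(L)$ is clean, and the degree check forces equality up to scalar.

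One small sharpening: since the coefficients of $F_L$ are linear in $\epsilon$, the discriminant $\mathrm{disc}_t(F_L)$ is a genuine \emph{polynomial} in $\epsilon$, so your error term $O(\epsilon^{r(q-1)+1/q})$ is in fact $O(\epsilon^{r(q-1)+1})$. This matters for interpreting the proposition: after dividing $\mathrm{disc}_t(F_L)$ by $\epsilon^{r(q-1)}$ you obtain a polynomial in $\epsilon$ whose constant term is exactly what you computed, which is the precise sense of ``${\rm Hu}_{X_\epsilon}|_{\epsilon=0}$'' here. You should also say a word about why the extraneous factors removed in passing from $\mathrm{disc}_t(F_L)$ to ${\rm Hu}_{X_\epsilon}$ (leading-coefficient factors in Stiefel coordinates, as in Example~\ref{ex:hypersurfaces}) do not disturb the $\epsilon$-leading term; for generic $f,g$ these factors are nonvanishing at $\epsilon=0$, so this is routine. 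With those two points made explicit, your sketch becomes a self-contained alternative to the cited reference.
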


Note that the variety $V(f,g)$ consists of $rp = qr^2$ points
$u = (u_0:u_1:u_2)$. For each of these, the linear form
${\rm Ch}_u \, = \,u_0 p_0 + u_1 p_1 + u_2 p_2$ appears as a factor.
The Hurwitz form ${\rm Hu}_{V(f)}$ of the curve  $V(f)$ has
degree $r(r-1)$. Hence the right hand side in
(\ref{eq:katzlimit}) has degree
$$ r(r-1)q  \,+\,    (qr^2) (q-1) \quad = \quad p(p-1). $$
   
Proposition  \ref{prop:katz} is indicative of 
what happens to the limit of the Hurwitz form 
${\rm Hu}_{X_\epsilon}$ when a family of
irreducible varieties $X_\epsilon$ degenerates to a non-reduced cycle
or scheme $X_0$. The limit $ {\rm Hu}_{X_\epsilon}|_{\epsilon = 0}$  remembers
information about the family that cannot be recovered from~$X_0$.

\section{Reduced degenerations}

In this section we consider families $X_\epsilon $
whose limit object $X_0$ is a reduced cycle,
and we show that the indeterminacy seen in Proposition \ref{prop:katz}
no longer happens. Instead, we will have
\begin{equation}
\label{eq:HuDegen}
 {\rm Hu}_{X_\epsilon}|_{\epsilon = 0} \,\, = \,\, {\rm Hu}_{X_0} . 
 \end{equation}

To make sense of this identity, we now define the Hurwitz form of
a {\em reduced cycle} $Y$ in $\PP^n$.
Let  $Y = \sum_{i=1}^l Y_i$ where
$Y_1,\ldots,Y_l$ are distinct irreducible $d$-dimensional subvarieties in $\PP^n$.
Let $Z_1, \ldots, Z_m$ be the distinct irreducible
varieties of dimension $d-1$ that arise as components in the
pairwise intersections
 $Y_i \cap Y_k$. We write $\nu_j$ for the multiplicity of
  the one-dimensional local ring $\mathcal{O}_{Y,Z_j}$ at its maximal ideal.
If all pairwise intersections $Y_i \cap Y_k$ are transverse then
$\nu_j$ simply counts the number of components $Y_i$ of $Y$ that contain $Z_j$.

We now define the {\em Hurwitz form} of the reduced cycle $Y$ as follows:
\begin{equation}
\label{eq:HuCycle}
 {\rm Hu}_Y \,\, = \,\,\, \prod_{i=1}^l {\rm Hu}_{Y_i}  \cdot \prod_{j=1}^m ({\rm Ch}_{Z_j})^{2 \nu_j-2} .
 \end{equation}
Here ${\rm Hu}_{Y_i}$ is the Hurwitz form of a $d$-dimensional variety,
while ${\rm Ch}_{Z_j}$ is the Chow form of a $(d-1)$-dimensional variety,
so they both define hypersurfaces in the same Grassmannian ${\rm Gr}(d,\PP^n)$.
Our main result in this section states that this is the correct definition for limits.

\begin{theorem}
\label{thm:flat}
Let $(X_\epsilon)$ be a flat family of subschemes in $\PP^n$,
where the general fiber (for $\epsilon \not=0 $) is irreducible
and regular in codimension $1$, 
and the special fiber (for $\epsilon = 0$) is reduced and  each of its
irreducible components is regular in codimension $1$.
The
Hurwitz form of $X_\epsilon$ satisfies the 
identity (\ref{eq:HuDegen}) with the Hurwitz form
${\rm Hu}_{X_0} $ of the special fiber defined as in (\ref{eq:HuCycle}).
\end{theorem}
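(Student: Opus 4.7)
The plan is to reduce the identity to the case of curves via a generic linear section, and then identify $\lim_{\epsilon \to 0} C_\epsilon^*$ with the formula in (\ref{eq:HuCycle}) by comparing supports, multiplicities, and degrees.

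First I would pick a generic linear subspace $L' \subset \PP^n$ of codimension $d-1$ and consider the restricted family $C_\epsilon := X_\epsilon \cap L'$. Flatness of $(X_\epsilon)$ together with Bertini's theorem, applied to each component $Y_i$, makes this a flat family of curves in $L'$ whose general fiber is smooth and irreducible and whose special fiber $C_0 = \bigcup_i C_i$ with $C_i = Y_i \cap L'$ is a reduced union of smooth curves meeting at the finite set $\bigcup_j (Z_j \cap L')$, with exactly $\nu_j$ analytic branches at each point of $Z_j \cap L'$. For a hyperplane $H \subset \PP^n$, the codimension-$d$ subspace $L = L' \cap H$ satisfies $L \cap X_\epsilon = C_\epsilon \cap H$, so $L \in \mathcal{H}_{X_\epsilon}$ exactly when $H$ is tangent to $C_\epsilon$ inside $L'$. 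This identifies the restriction of ${\rm Hu}_{X_\epsilon}$ to the Schubert variety of codimension-$d$ subspaces contained in $L'$ with the defining polynomial of the dual curve $C_\epsilon^* \subset (L')^\vee$. Since every generic codimension-$d$ subspace of $\PP^n$ is contained in some generic $L'$, and both sides of (\ref{eq:HuCycle}) restrict compatibly (the factors ${\rm Hu}_{Y_i}$ restrict to ${\rm Hu}_{C_i}$, and ${\rm Ch}_{Z_j}$ to the Chow forms of the point sets $Z_j \cap L'$), it is enough to prove the curve version of the theorem.

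For curves I would compare the two sides as divisors in $(L')^\vee$. Both have the same support: $H \cap C_0$ is non-reduced precisely when $H$ is tangent to some $C_i$ or passes through a singular point $p$ of $C_0$. Both have the same total degree $2p + 2g - 2$, using constancy of the arithmetic genus in the flat family together with the standard formula $g = \sum_i g_i + \sum_p (\nu_p - 1) - l + 1$ for the arithmetic genus of a curve with ordinary multi-branch points. The multiplicity along each $C_i^*$ in the limit is one, by transversal deformation at a generic tangent hyperplane to $C_i$.

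The crux, and the main obstacle, is the multiplicity $2\nu_p - 2$ along the pencil of hyperplanes through each singular point $p$ with $\nu_p$ branches. I would establish it by a local analytic analysis: analytically near $p$, the family $C_\epsilon$ is a smoothing of $\nu_p$ transverse smooth branches, and the order of vanishing of ${\rm Hu}_{C_\epsilon}$ along the pencil through $p$ is computed via a Puiseux-type analysis of how the $\nu_p$ local tangent hyperplanes coalesce as $\epsilon \to 0$. Equivalently one may apply Riemann--Hurwitz to the composition $\tilde{C}_0 \to C_0 \to \PP^1$ of normalization with projection from a generic point of $(L')^\vee$: the total ramification $\sum_i (2p_i + 2g_i - 2)$ of the smooth map $\tilde{C}_0 \to \PP^1$ accounts for the tangent contributions to the $C_i^*$, while the remaining $\sum_p (2\nu_p - 2)$ units of ramification are absorbed by the pencils through the singular points of $C_0$; combined with the degree count, this pins down the multiplicity. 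The delicate point is ensuring no spurious cross-terms appear when a component $C_i$ meets the pencil through $p$ in a tangential way, which is controlled by the genericity of $L'$ and of the projection center.
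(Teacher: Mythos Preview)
Your reduction to the curve case via a generic codimension-$(d{-}1)$ linear section $L'$ is exactly the paper's first move. After that the routes diverge: the paper projects the resulting curve family further to $\PP^2$ by a generic linear projection and invokes the General Class Formula from Fischer's \emph{Plane Algebraic Curves} to identify the factorization of the limiting dual curve; you instead attempt a direct support--degree--multiplicity comparison inside $(L')^\vee$.

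Your direct argument has a real gap at the degree and multiplicity steps. You assert that $C_0$ has ``exactly $\nu_j$ analytic branches'' at each point of $Z_j\cap L'$ and then quote the arithmetic-genus identity $g=\sum_i g_i+\sum_p(\nu_p-1)-l+1$. But $\nu_j$ is defined in the paper as the Hilbert--Samuel multiplicity of $\mathcal{O}_{Y,Z_j}$, not the branch count, and the correct genus identity has the $\delta$-invariant $\delta_p$ in place of $\nu_p-1$. The two agree only when the singularity of $C_0$ at $p$ is seminormal (analytically a union of smooth branches with linearly independent tangent directions), and nothing in the stated hypotheses forces this: if two components $Y_i,Y_k$ are tangent along $Z_j$---for instance $Y_i=\{y=0\}$ and $Y_k=\{yw=x^2\}$ in $\PP^3$---the generic slice acquires a tacnode with $\nu_p=2$ but $\delta_p=2$, and your degree count is off by $2(\delta_p-\nu_p+1)$. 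Your Riemann--Hurwitz computation of the multiplicity along the pencil $\check p$ inherits the same defect, since you pin it down only through the global degree balance rather than by a local calculation. To make the self-contained approach work you would need either an argument that the sliced singularities are seminormal under the given hypotheses, or a genuinely local determination of the order of $\check p$ in $\lim_{\epsilon\to 0}(C_\epsilon)^*$ (the Puiseux-type analysis you allude to but do not carry out), together with a check that this local invariant matches the $\nu_j$ appearing in~(\ref{eq:HuCycle}).
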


\begin{proof}
By intersecting with a general linear subspace $L'$ of codimension $d-1$ in $\PP^n$, as in the proof
of Theorem \ref{thm:basic}, we reduce the statement to
the $d=1$ case, when $(X_\epsilon)$ is a family of curves.
In that case, the Hurwitz form ${\rm Hu}_{X_\epsilon}$ is the 
polynomial that defines the hypersurface $(X_\epsilon)^*$
dual to the curve $X_\epsilon$.  Here we are tacitly assuming that
$X$ is a curve of degree $\geq 2$.

We first consider the case of planar curves $(n=2)$.
Here, our result follows
from  the {\em General Class Formula} in \cite[Section A.5.4]{Fis}.
Indeed, the $Z_i$ correspond to singular points on the curve,
and the $\nu_i$ are the degrees of the singularities 
in the sense of \cite[Section A.5.2]{Fis}.

For the general case $(n \geq 3)$, we consider a
random projection $\pi : \PP^n \dashrightarrow \PP^2$,
and we examine the degeneration of the 
irreducible plane curve $\pi(X_\epsilon)$ to its limit $\pi(X_0)$.
The equation of the dual curve $\pi(X_\epsilon)^*$ is an 
irreducible polynomial.  In particular, 
the singularities of $\pi(X_\epsilon)$ that were acquired by the
projection do not appear as factors.
By \cite[Proposition I.1.4.1, page 31]{GKZ},  the equation of $\pi(X_\epsilon)^*$
is found by restricting the irreducible polynomial ${\rm Hu}_{X_\epsilon}$ to
a general plane $\PP^2$ in $\PP^n$, namely the plane dual to the base locus of $\pi$.
As $\epsilon$ approaches $0$, we obtain the restriction of ${\rm Hu}_{X_0} $
to the same plane. This now factors as in the previous paragraph, with one reduced factor for each
irreducible component of $X_0$ or $\pi(X_0)$, and non-reduced factors 
for the codimension $1$ intersections of these components.
\end{proof}

\begin{remark} \rm
Theorem \ref{thm:flat} requires the
hypothesis that the components of $X_0$ are
regular in codimension $1$. For instance,
suppose that $(X_\epsilon)$ is a family of
smooth cubic curves in $\mathbb{P}^2$ and
$X_0$ is a nodal cubic. Then $H_{X_0}$ has degree $4$ but
the left hand side of (\ref{eq:HuDegen})  has degree $6$.
\end{remark}

\begin{remark} \rm
Formula (\ref{eq:HuCycle}) can be viewed a generalization of
the following familiar equation relating the resultant of two 
polynomials in one variable to the discriminant of their product:
$$ {\rm Discrim}(f_1 \cdot f_2) \,\,= \,\, {\rm Discrim}(f_1) \cdot {\rm Discrim}(f_2) \cdot {\rm Res}(f_1, f_2)^2. $$
For an extension to linear
sections of toric varieties see \cite[Corollary 6]{DEK}.
\end{remark}

An important special case of Theorem \ref{thm:flat} arises when
the limit cycle $Y = X_0$ is an arrangement of linear subspaces of dimension $d$
in $\PP^n$.
Here, the first factor on the right hand side in (\ref{eq:HuCycle}) disappears, and 
the Hurwitz form of the subspace arrangement $Y$ equals
\begin{equation}
\label{eq:HuCycle2}
 {\rm Hu}_Y \,\, = \,\,\, \prod_Z ({\rm Ch}_{Z})^{2 \nu(Z)-2} .
 \end{equation}
 In this formula,
  $Z$ runs over all strata of dimension $d-1$ 
 in the subspace arrangement  $Y$, and $\nu(Z)$ is the number of $d$-planes
   $Y_i$ that contain  $Z$.
   
Of particular interest is the scenario when $Y$ consists of
coordinate subspaces in $\PP^n$. Here, $Y$
can be identified with a simplicial complex of dimension $d$
on the vertex set $\{0,1,\ldots,n\}$. The product 
(\ref{eq:HuCycle2}) is over all $(d-1)$-simplices $Z$.
These correspond to coordinate planes
$$ {\rm span}({\bf e}_{i_1},{\bf e}_{i_2},\ldots,{\bf e}_{i_d}) \,\, = \,\,
V(x_{j_0}, x_{j_1},\ldots,x_{j_{n-d}}). $$
These are indexed by set partitions 
$\{0,1,\ldots,n\} = \{i_1,i_2,\ldots,i_d \} \cup \{j_0,j_1,\ldots,j_{n-d}\}$,
and their Chow forms are just Pl\"ucker variables,
as in \cite[Proposition 3.4]{Cortona}.

\begin{equation}
\label{eq:HuCycle3}
 p_Z \,\, := \,\, p_{i_1 j_2 \cdots j_d} \,\, = \,\, q_{j_0 j_1 \cdots j_{n-d}} \,\,= \,\,
{\rm Ch}_Z. 
\end{equation}
This situation arises whenever the ideal of a projective variety has a 
Gr\"obner basis with a squarefree initial ideal. This is now the
Stanley-Reisner ideal of the simplicial complex $Y$.

\begin{corollary}
\label{cor:leadlead}
Let $I$ be a homogeneous prime ideal in $K[x_0,x_1,\ldots,x_n]$ and
suppose  $M = {\rm in}_{\bf w}(I)$ is a squarefree initial monomial ideal.
The initial form ${\rm in}_{\bf w}({\rm Hu}_{V(I)})$
of the Hurwitz form 
equals the monomial $\,\prod_Z p_Z^{2 \nu(Z)-2}$
where $Z$ runs over codimension $1$ simplices in $V(M)$.
\end{corollary}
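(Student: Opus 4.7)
The plan is to apply Theorem~\ref{thm:flat} to the Gr\"obner degeneration of $V(I)$ and then match the resulting limit to the initial form via the equivariance identity (\ref{eq:epsilonHu}). The weight vector $\mathbf{w}$ defines a one-parameter subgroup $\epsilon^{\mathbf{w}}$ of $(\CC^*)^{n+1}$ acting on $\PP^n$, and I set $X_\epsilon := \epsilon^{\mathbf{w}} \cdot V(I)$. For $\epsilon \neq 0$ the fiber is projectively equivalent to $V(I)$, and the central fiber is the Stanley-Reisner scheme $X_0 = V(M)$. Squarefreeness of $M$ makes $X_0$ reduced, and its irreducible components are the coordinate $d$-planes $Y_i \cong \PP^d$ indexed by the facets of the associated simplicial complex; each such $Y_i$ is smooth, hence regular in codimension $1$, so the hypotheses of Theorem~\ref{thm:flat} are satisfied.

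By Theorem~\ref{thm:flat}, the Hurwitz form specializes as $\,{\rm Hu}_{X_\epsilon}|_{\epsilon=0} = {\rm Hu}_{X_0}$, with the right-hand side given by (\ref{eq:HuCycle}). Since each $Y_i$ is linear of degree $1$, its Hurwitz form is trivial and the first product in (\ref{eq:HuCycle}) disappears. Pairwise intersections of coordinate subspaces are automatically transverse, so $\nu_j$ simply counts the components containing the stratum. Applying (\ref{eq:HuCycle2}) and then (\ref{eq:HuCycle3}) converts each Chow factor ${\rm Ch}_Z$ into the Pl\"ucker variable $p_Z$, yielding the monomial $\prod_Z p_Z^{2\nu(Z)-2}$ as claimed. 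To close the loop, I invoke (\ref{eq:epsilonHu}) to write ${\rm Hu}_{X_\epsilon}(p) = {\rm Hu}_{V(I)}(\epsilon^{-\mathbf{w}} p)$; expanding in Pl\"ucker monomials and rescaling by the dominant power of $\epsilon^{-1}$, the $\epsilon \to 0$ limit is precisely ${\rm in}_{\mathbf{w}}({\rm Hu}_{V(I)})$ by definition of the initial form.

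The main obstacle is the last identification: the Hurwitz form is only defined modulo the quadratic Pl\"ucker relations, so a priori different representatives could produce different initial forms and obstruct a clean reading-off of the limit as a monomial. This is circumvented precisely because ${\rm Hu}_{X_0}$ has just been computed to be a single Pl\"ucker monomial, so no cancellation in the Pl\"ucker ideal can interfere, and the $\epsilon$-rescaling of ${\rm Hu}_{V(I)}(\epsilon^{-\mathbf{w}} p)$ is unambiguous. A minor secondary point to verify in passing is that $V(I)$ itself is regular in codimension $1$, so that Theorem~\ref{thm:flat} applies; this is part of the running hypotheses under which ${\rm Hu}_{V(I)}$ is a meaningful object.
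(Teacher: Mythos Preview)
Your proof is correct and follows exactly the route the paper takes: the paper's own argument is a one-line invocation of Theorem~\ref{thm:flat} together with (\ref{eq:HuCycle2}) and (\ref{eq:HuCycle3}), and you have simply unpacked that invocation in detail, including the use of (\ref{eq:epsilonHu}) to identify the $\epsilon\to 0$ limit with the initial form. Your remarks on the Pl\"ucker-relations ambiguity and on the implicit codimension-$1$ regularity hypothesis for $V(I)$ are careful observations that the paper leaves tacit, but they do not constitute a different approach.
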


\begin{proof}
Using (\ref{eq:HuCycle2}) and the identification (\ref{eq:HuCycle3}),
 this is a direct consequence of Theorem~\ref{thm:flat}.
\end{proof}

This corollary allows us to compute the Hurwitz polytope and initial terms of the Hurwitz form
for any homogeneous ideal whose initial monomial ideals are all squarefree.
One situation where this holds is the ideal generated by the maximal minors
of a rectangular matrix of unknowns, by \cite{BZ, SZ}. Here the initial ideals
correspond to the {\em coherent matching fields} in \cite{SZ}, and each of these determines
an initial Pl\"ucker monomial in the Hurwitz form for the determinantal
variety of maximal minors. We illustrate this with an example.

\begin{example}[Ideals of Maximal Minors] \rm
The  four $3 \times 3$-minors of the $3 \times 4$-matrix
$$ \begin{pmatrix}
x_0 & x_1 & x_2 & x_3 \\
x_4 & x_5 & x_6 & x_7 \\
x_8 & x_9 & x_{10} & x_{11}
\end{pmatrix}
$$
form a universal Gr\"obner basis for the ideal $I$ they generate \cite[Theorem 7.2]{SZ}.
The  variety $X = V(I) \subset \PP^{11}$  represents the $3 \times 4$-matrices of rank $\leq 2$.
Here, $n= 11$, $d = 9$, $p = 6$, and $g = 3$, so the Hurwitz degree equals
${\rm Hdeg}(X) = 16$.  The determinantal ideal $I$
has $96$ initial monomial ideals, all squarefree, in two symmetry classes.
Using Corollary \ref{cor:leadlead}, we 
read off the corresponding initial Pl\"ucker monomials of degree $16$ in 
the Hurwitz form ${\rm Hu}_X$:
$$ \begin{matrix}
 & \hbox{initial monomial ideal} & &\hbox{initial Pl\"ucker monomial in ${\rm Hu}_X$} \\
72 \,\,{\rm ideals}\,\,{\rm like} & \langle x_2 x_5 x_8, x_3 x_5 x_8, x_3 x_6 x_8 ,x_3 x_6 x_9 \rangle &  
\quad \leadsto &
q_{235}^2 q_{238}^2 q_{358}^2 q_{356}^2 
q_{368}^2 q_{389}^2 q_{568}^2 q_{689}^2  \\
 24 \,\,{\rm ideals}\,\,{\rm like} & \langle x_0 x_6 x_9, x_3 x_4 x_9 , x_3 x_6 x_8, x_3 x_6 x_9 \rangle  & 
\quad \leadsto &
q_{036}^2 q_{039}^2 q_{346}^2 
q_{389}^2 q_{469}^2 q_{689}^2 q_{369}^4 
\\
\end{matrix}
$$
So, while the Hurwitz form ${\rm Hu}_X$ is a huge polynomial that is hard to compute explicitly,
it is easy to write down the $96$ initial Pl\"ucker monomials of ${\rm Hu}_X$,
one for each vertex of the Hurwitz polytope ${\rm HuPo}_X$.
This polytope has dimension $6$, it is simple, and has the same
normal fan as the Chow polytope
${\rm ChPo}_X$. By \cite[Theorem 2.8]{SZ}, this is the
 {\em transportation polytope} of 
nonnegative $3 \times 4$-matrices whose rows sum to $4$
and whose columns sum to $3$.
  \hfill $\Diamond$
\end{example}

Here is 
another important class of varieties
all of whose initial ideals are squarefree.

\begin{example}[Reciprocal Linear Spaces] 
\label{ex:RLS}
\rm
Fix a $(d+1) \times (n+1)$-matrix $A$ of rank $d+1$ with entries in $\mathbb{Q}$,
and let $X$ be the reciprocal of the row space of $A$. Thus $X$ is the
Zariski closure in $\PP^n$ of the set of points $(u_0:u_1:\cdots:u_n)$ 
with all coordinates nonzero and such
that $(u_0^{-1},u_1^{-1},\ldots,u_n^{-1}) \in {\rm rowspace}(A)$.
Proudfoot and Speyer \cite{PS} show that the circuits of $A$
define a universal Gr\"obner basis for the ideal of~$X$,
and each initial monomial ideal corresponds to 
the {\em broken circuit complex} of $A$
under some ordering of $\{0,1,\ldots,n\}$.
For example, if $A$ is generic, in the sense
that all maximal minors of $A$ are nonzero, then
$p = \binom{n}{d}$, and the $p$ facets of the broken circuit complex are
 $\{0,i_1,\ldots,i_d\}$ where $1 \leq i_1< \cdots < i_d \leq n$.
 The corresponding initial monomial of the Hurwitz form~is
 $$ \prod (p_{0 i_2 \cdots i_d})^{2 (n-d)} , $$
where the product is over all $(d-1)$-element subsets 
$\{i_2,\ldots,i_d\}$ of $\{1,\ldots,n\}$.
Therefore,
$$ {\rm Hdeg}(X) \,\,\,= \,\,\, 2 \binom{n}{d-1}(n-d). $$
If $A$ is not generic then ${\rm Hdeg}(X)$ is given by
a matroid invariant that appears in \cite{DSV} and \cite{SSV}.

Our study of the Hurwitz form 
 furnishes the answer to Question 4
 in \cite[\S 7, p.~706]{SSV}:
 {\em How is the entropic discriminant related to the Gauss curve of the central curve?}
The central curve of $A$ is essentially the linear section $X \cap L'$, a smooth
curve of degree $p$ and genus $g$, and its Gauss curve has degree 
${\rm Hdeg}(X) = 2p+2g-2$, by the generalized Pl\"ucker formula.
The Hurwitz form ${\rm Hu}_X$ has that same degree, and hence so does the
entropic discriminant:
\end{example}

\begin{corollary} \label{cor:BtimesA}
The entropic discriminant of the matrix $A$ equals
$(b_1 b_2 \cdots b_d)^{2 \binom{n}{d-1}(d-n)}$ times 
the Hurwitz form in primal Stiefel coordinates
of $X = {\rm rowspace}(A)^{-1}$
evaluated at the matrix
\begin{equation}
\label{eq:BtimesA} \begin{pmatrix}
 b_1 & -b_0 & 0 & 0 & \cdots & 0  \\
  0  & b_2 & -b_1 & 0 & \cdots &  0 \\
  0 &  0   & b_3 & -b_2 & \cdots &  0 \\
  \vdots & \vdots & & \ddots & \ddots &  \\
  0 & 0 & 0  & \cdots & b_d & \!\! - b_{d-1} \\
  \end{pmatrix} \cdot A.
\end{equation}
We  follow \cite{SSV} in using
coordinates $b_0,b_1,\ldots,b_d$ for right hand side vectors
of the matrix $A$.
\end{corollary}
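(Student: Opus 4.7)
The plan is to identify the entropic discriminant $E_A(b)$ with the principal factor of ${\rm Hu}_X(BA)$, viewed as a polynomial in the $b_i$, via the central-curve interpretation of reciprocal linear spaces spelled out in Example~\ref{ex:RLS}. First I would recall from \cite{DSV, SSV} that, for fixed $A$ and varying right-hand side $b = (b_0,\ldots,b_d)$, the projective closure of the central curve of the linear program with data $(A,b)$ is cut out from $X = \operatorname{rowspace}(A)^{-1}$ by a specific codimension-$d$ linear subspace $L_b \subset \PP^n$; by definition, $E_A(b)$ is the polynomial in $b$ whose vanishing records when $L_b \cap X$ fails to be reduced.

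The first substantive step is to write $L_b$ in primal Stiefel coordinates. I would translate the central-path equations into linear relations on reciprocal coordinates in the rowspace of $A$: the consecutive-ratio conditions $b_i x_{i+1} = b_{i-1} x_i$ for $i=1,\ldots,d$ (which characterize the central direction up to the barrier parameter) are precisely the $d$ linear forms whose coefficient matrix is $B\cdot A$ in (\ref{eq:BtimesA}). This yields $L_b = \ker(BA)$. Once this identification is in place, Theorem~\ref{thm:basic} applied to the primal Stiefel matrix $BA$ gives that ${\rm Hu}_X(BA) = 0$ exactly on the $b$-locus where the central curve degenerates. Consequently ${\rm Hu}_X(BA)$ and $E_A(b)$ cut out the same irreducible hypersurface in $b$-space, so they differ only by a Laurent monomial in $b_0,\ldots,b_d$ and a rational scalar.

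To pin down that monomial factor I would apply the Cauchy--Binet formula: each primal Pl\"ucker coordinate of $BA$ is an $A$-linear combination of the maximal minors of $B$, and a short determinant calculation, using the bidiagonal form of $B$, shows that all those minors share a common monomial factor in $b_1,\ldots,b_d$. Raising this to the Pl\"ucker degree ${\rm Hdeg}(X) = 2\binom{n}{d-1}(n-d)$ of Example~\ref{ex:RLS} produces the advertised $(b_1\cdots b_d)^{2\binom{n}{d-1}(n-d)}$; dividing it back into ${\rm Hu}_X(BA)$ (equivalently, multiplying ${\rm Hu}_X(BA)$ by $(b_1\cdots b_d)^{2\binom{n}{d-1}(d-n)}$) yields a polynomial of the correct $b$-degree whose vanishing hypersurface agrees with that of the irreducible $E_A(b)$. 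The two then coincide up to a rational constant, which is pinned down by a convenient generic specialization.

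The main obstacle is the identification $L_b = \ker(BA)$ in Step~1. It demands careful translation between the affine central-curve setup of \cite{DSV, SSV}, where $b$ enters through the constraints $Ax=b$, and the projective reciprocal linear space $X \subset \PP^n$, where the corresponding linear section is governed by the bidiagonal matrix of consecutive ratios; the particular shape of (\ref{eq:BtimesA}) is dictated by this translation and must be verified rather than guessed. Once the geometric dictionary is fixed, the remaining Cauchy--Binet bookkeeping and the degree match force both the monomial factor and the overall scalar, completing the proof.
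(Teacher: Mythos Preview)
Your proposal is correct and follows essentially the same route as the paper's (very terse) proof: identify the $b$-parametrized codimension-$d$ linear section as $L_b=\ker(BA)$ via the condition $Ax\parallel b$, observe that the entropic discriminant and ${\rm Hu}_X(BA)$ therefore vanish on the same locus, and extract the extraneous monomial from the maximal minors of the bidiagonal matrix $B$. The paper compresses your Step~1 into a single asserted equivalence and computes the common factor directly (it finds $b_1\cdots b_{d-1}$, matching the worked example, rather than $b_1\cdots b_d$ as printed in the statement), but the argument is the same.
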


\begin{proof}
The point $(b_0{:}b_1{:}\cdots{:}b_d) \in \PP^d$
lies in the entropic discriminant of $A$ if and only if the
row space of (\ref{eq:BtimesA}) intersects $X$.
The Pl\"ucker coordinate vector of that row space
can be written as a linear expression in $(b_0,b_1,\ldots,b_d)$. It 
is obtained by removing the common factor
$b_1 b_2 \cdots b_{d-1}$ from the maximal minors. This extraneous factor
has exponent $-{\rm Hdeg}(X)$.
\end{proof}

For example, let $d=2, n=4$, and consider the surface $X \subset \PP^4$ defined by the matrix
$$ A \quad = \quad \begin{pmatrix}
 1 & 0 & 0 & 1 & 1 \\
 0 & 1 & 0 & 1 & 0 \\
 0 & 0 & 1 & 0 & 1 \end{pmatrix}. $$
 Explicitly, $X = V( x_0 x_2-x_0 x_4-x_2 x_4, x_0 x_1-x_0 x_3-x_1 x_3)$,
 and ${\rm Hdeg}(X) = 8$. The Hurwitz form
 (in primal Stiefel coordinates) is a homogeneous polynomial ${\rm Hu}_X$
 with $46958$ terms of degree $16$  in the ten entries
 of a $2 \times 5$-matrix of unknowns. If we substitute (\ref{eq:BtimesA}) 
 into ${\rm Hu}_X$ then we obtain $b_1^8$ times the
 sum of squares listed explicitly
  in \cite[Example 1, page 679]{SSV}.
 
 \smallskip
 
 We close with the remark that
  Corollary \ref{cor:BtimesA} can be extended to also
 answer Question~5  in \cite[\S 7, p.706]{SSV},
 as that pertains to translating the variety $X$ via
 the torus action on $\PP^n$.
The 
``Varchenko discriminant'' sought in that question is obtained  by
 multiplying the matrix (\ref{eq:BtimesA}) on the right with the
 diagonal matrix $ {\rm diag}(c_0,c_1,\ldots,c_n)$
 before substituting into ${\rm Hu}_X$.
  Finally, it should be possible to resolve also Question~2 in \cite[\S 7, p.~705]{SSV},
 via the description of the Hurwitz polytope that is implicit in 
 Example~\ref{ex:RLS}. We leave this to a future project.

\bigskip 
\medskip

\noindent
{\bf Acknowledgments.}
This work was supported by the National Science Foundation (DMS-1419018).
The author is grateful to  Alicia Dickenstein,
Jon Hauenstein, Giorgio Ottaviani
and Raman Sanyal for helpful comments.

\smallskip

\begin{small}

\bigskip

\footnotesize 
\noindent {\bf Author's address:}

\noindent Bernd Sturmfels, Department of Mathematics, University of California, Berkeley, CA 94720-3840, USA,
{\tt bernd@berkeley.edu}

\end{small}

\end{document}